\newtheorem{theorem}{Theorem}
\newtheorem{acknowledgement}[theorem]{Acknowledgement}
\newtheorem{lemma}[theorem]{Lemma}
\newtheorem*{remark}{Remark}
\begin{document}






\begin{abstract}
We obtain pointwise upper bounds on the derivatives of the heat kernel on
Damek-Ricci spaces and we study the $L^{p}$-boundedness of Littlewood-Paley-Stein
operators.
\end{abstract}

\title[Estimates of the derivatives of the heat kernel ]{Littlewood-Paley-Stein operators on Damek-Ricci spaces}
\author{A. Fotiadis and  E. Papageorgiou }
\curraddr{Department of Mathematics, Aristotle University of Thessaloniki,
Thessaloniki 54.124, Greece }
\subjclass[msc2010]{Primary 58J35, Secondary 43A90, 43A15}
\keywords{heat kernel, time derivative, Damek-Ricci spaces, gradient
estimates, maximal operator, Littlewood-Paley-Stein operator}
\maketitle





\section{Introduction and statement of the results}

\label{sectionintroduction}

In this article we prove estimates of the derivatives of the heat kernel on a
Damek-Ricci space. Applying these
estimates we prove the $L^{p}$-boundedness of Littlewood-Paley-Stein
operators.

For the statement of our results we need to introduce some notation. A Damek-Ricci space $S$
is a solvable extension $S= A \ltimes N$ of a Heisenberg type group $N$, equipped with an
invariant Riemannian structure. They are named after E. Damek and F. Ricci, who noted that they are harmonic spaces, \cite{DAMRIC}. As Riemannian manifolds, these solvable Lie groups include all symmetric spaces
of the noncompact type and rank one. Most of them are nonsymmetric harmonic manifolds, and provide
counterexamples to the Lichnerowicz conjecture.

Let $n$ be the dimension of $S$ and let $-\Delta_{S}$ denote the associated Laplace-Beltrami operator. The $L^2$ spectrum of the Laplace-Beltrami operator is the half line
$[Q^2/4, +\infty)$, where $Q$ is the homogeneous dimension
of $N$ (see Section \ref{Preliminaries} for more details). Then, the heat kernel
$h_t$ of $S$ is the fundamental solution of the heat equation $\partial_t h_t=-\Delta_S h_t.$ It is a radial convolution kernel on $S$, i.e. $h_t(x,y)=h_t(d(x,y)).$ By realizing the group $S$ as the unit ball in the corresponding Lie algebra $\mathfrak{s}$, the heat kernel can be expressed as a radial function $h_t(r)$, where $r$ is the geodesic distance to the origin in the ball model.

One of the main objectives in the present article is to prove the following
result.

\begin{theorem}
\label{Main result} If $S$ is a Damek-Ricci space, then for
all $\epsilon>0$ and $i\in \mathbb{N}$ there is a constant $c>0$ such that
\begin{equation}  \label{estimates main result}
\left\vert \frac{\partial ^{i}h_{t}}{\partial t^{i}}(r)\right\vert \leq
c\;t^{-\frac{n}{2}-i}e^{-(1-\epsilon )\left(\frac{Q^2}{4} t+\frac{Q}{2} r +\frac{r^{2}}{4t}\right)},
\end{equation}
$\text{ for all } t>0 \text{ and }r\geq 0.$
\end{theorem}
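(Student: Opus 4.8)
The plan is to reduce the estimate to an essentially Euclidean computation by means of the explicit description of $h_t$ produced by the inverse Abel transform on a Damek--Ricci space (cf.\ Anker--Damek--Yacoub). Up to its precise shape in the various cases according to the parities of the two dimensions $m_1,m_2$ attached to $N$, that description expresses the heat kernel as
\[
h_t(r)=e^{-\rho^2 t}\,\mathcal D_r\!\left[\,t^{-\kappa}e^{-(\cdot)^2/(4t)}\,\right](r),\qquad \rho:=Q/2,
\]
where $\kappa\in\{\tfrac12,\tfrac32\}$ is fixed, $e^{-s^2/(4t)}$ is the Euclidean Gaussian, and $\mathcal D_r$ is an explicit operator acting on the radial variable alone --- a product of operators of the type $-(\sinh\mu r)^{-1}\partial_r$ with $\mu\in\{\tfrac12,1\}$, composed in the non-even cases with one or two Weyl fractional integrals in $r$. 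Since $\mathcal D_r$ involves no $t$, it commutes with $\partial_t^i$, whence
\[
\partial_t^i h_t(r)=\sum_{j=0}^i\binom ij(-\rho^2)^{\,i-j}\,e^{-\rho^2 t}\,\mathcal D_r\!\left[\partial_t^{\,j}\!\left(t^{-\kappa}e^{-(\cdot)^2/(4t)}\right)\right](r).
\]

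The argument then proceeds in three steps. First, one differentiates the Gaussian block: an elementary induction gives $\partial_t^{\,j}\!\left(t^{-\kappa}e^{-s^2/(4t)}\right)=t^{-\kappa-j}P_j(s^2/t)\,e^{-s^2/(4t)}$ with $P_j$ a polynomial of degree $j$, and one keeps this exact form, since $\mathcal D_r$ must still be applied to it. Second, one applies $\mathcal D_r$: each factor $-(\sinh\mu r)^{-1}\partial_r$ turns $(\text{polynomial in }r^2/t)\,e^{-r^2/(4t)}$ into a finite sum of terms of the form $(\text{derivative of }(\sinh\mu r)^{-1})\cdot(\text{polynomial in }r,\ r^2/t)\cdot t^{-m}e^{-r^2/(4t)}$, and iterating $\mathcal D_r$ --- together with the descent integrals in the non-even cases --- one obtains, after treating $r\le 1$ and $r\ge 1$ separately and using that $h_t(\cdot)$ is genuinely smooth across the origin (so that the apparent singularities of $(\sinh\mu r)^{-1}\partial_r$ and of the descent kernels cancel), a bound of Anker--Damek--Yacoub type:
\[
\Bigl|\mathcal D_r\!\left[\partial_t^{\,j}\bigl(t^{-\kappa}e^{-(\cdot)^2/(4t)}\bigr)\right](r)\Bigr|\le C_{j,\epsilon}\,t^{-n/2-j}(1+r)(1+r+t)^{(n-3)/2}\,e^{-\rho r}\,e^{-(1-\epsilon)r^2/(4t)}.
\]
Third, one absorbs the polynomial factor into the exponent: since $\rho>0$, for every $\eta>0$ one has $(1+r)(1+r+t)^{(n-3)/2}\le C_\eta\,e^{\eta(\rho^2 t+\rho r)}\le C_\eta\,e^{\eta(\rho^2 t+\rho r+r^2/(4t))}$. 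Feeding this back, summing the $i+1$ terms (the surplus powers of $t$ in the lower-order terms being harmless against $e^{-\rho^2 t}$), and choosing $\eta$ together with the auxiliary exponent of the second step small enough in terms of the target $\epsilon$, the exponentials combine into $e^{-(1-\epsilon)(\rho^2 t+\rho r+r^2/(4t))}$; since $\rho^2=Q^2/4$ and $\rho=Q/2$ this is exactly \eqref{estimates main result}.

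I expect the real difficulty to sit in the second step for those $(m_1,m_2)$ with $m_1$ or $m_2$ odd: there $\mathcal D_r$ carries genuine integrations (a method of descent), so one must differentiate under the integral sign and control the resulting radial integral uniformly in $t>0$ and $r\ge 0$, and --- both inside those integrals and near $r=0$ --- check that every singularity is only apparent and, most delicately, that the sharp constants $Q^2/4$, $Q/2$ and $1/4$ in the exponent survive unchanged. A cleaner-looking alternative, which bypasses the parity case analysis, is to pass to complex time: one first establishes that $z\mapsto h_z(r)$ extends holomorphically to $\{\operatorname{Re}z>0\}$ with, on each proper subsector, $|h_z(r)|\le C|z|^{-n/2}(1+r)(1+r+|z|)^{(n-3)/2}e^{-\rho^2\operatorname{Re}z}e^{-\rho r}e^{-\frac{r^2}{4}\operatorname{Re}(1/z)}$, and then writes $\partial_t^i h_t(r)=\frac{i!}{2\pi i}\oint_{|z-t|=\delta t}h_z(r)(z-t)^{-i-1}\,dz$; on that circle $\operatorname{Re}z\ge(1-\delta)t$ and $\operatorname{Re}(1/z)\ge\frac1{(1+\delta)t}$, so taking $\delta=\delta(\epsilon)$ small and absorbing the polynomial factor as above again yields \eqref{estimates main result} --- at the cost of having to prove the complex-time bound, which in turn one reads off from the same explicit formula.
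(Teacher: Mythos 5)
Your route is genuinely different from the paper's. The paper never touches the explicit inverse Abel transform: it starts from Grigor'yan's general theorem, which converts the on-diagonal bound $h_t(x,x)\le f(t)^{-1}$ into $|\partial_t^i h_t|\le (f(t)f_{2i}(t))^{-1/2}$ and thus yields the correct power $t^{-(n/2)-i}$ but no exponential decay at all; it then recovers the factors $e^{-Q^2t/4}$, $e^{-Qr/2}$, $e^{-r^2/4t}$ by an infinite bootstrap, namely a three-point (Landau--Porper type) inequality bounding $|\partial_t f|$ by $|f|$ and $|\partial_t^2 f|$ (Lemma \ref{Porper}), applied inductively so as to produce exponents $\beta_\ell^i,\gamma_\ell^i$ that a linear recurrence shows converge to $1$ as $\ell\to\infty$ and the auxiliary $\epsilon$ tends to $0$. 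You instead differentiate the explicit formula $h_t=e^{-Q^2t/4}\,\mathcal{D}_r[t^{-\kappa}e^{-(\cdot)^2/4t}]$ in $t$ and redo the Anker--Damek--Yacoub kernel analysis for the differentiated Gaussian. If carried out, your method gives strictly more: a sharp bound retaining the polynomial factors $(1+r)(1+r+t)^{(n-3)/2}$, of which the theorem is a weak corollary, and it adapts to complex time. The paper's method is softer and shorter given its two black boxes (Grigor'yan's theorem and the $i=0$ estimate of Anker--Damek--Yacoub), and it avoids the parity case analysis entirely.

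The caveat is that the displayed bound in your second step --- the estimate for $\mathcal{D}_r[\partial_t^{\,j}(t^{-\kappa}e^{-(\cdot)^2/4t})]$ --- is exactly where all the work sits, and you assert it rather than prove it. For $j=0$ it is the Anker--Damek--Yacoub theorem; for $j\ge1$ the extra factor $P_j(r^2/t)$ is harmless in principle (it is absorbed into $e^{-(1-\epsilon)r^2/4t}$ at the cost of an $\epsilon$), but the iteration of $-(\sinh\mu r)^{-1}\partial_r$ and, above all, the Weyl fractional integrals in the odd-parity cases must be controlled uniformly in $t>0$ and $r\ge0$, including near $r=0$ where the singularities are only apparent. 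You flag this yourself, correctly. As written the proposal is a credible plan whose central lemma is unproven; I see no step that would fail, but the proof is not complete until that lemma is supplied (or until the complex-time bound in your alternative is established, which requires essentially the same computation).
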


For the proof we use an inductive argument and we obtain a decreasing sequence of upper bounds for the derivatives of the heat kernel.

Denote by $H_{t}=e^{-\Delta_S t}$ the heat semigroup on $S$. We denote with $\mathbb{N}$ the set of natural numbers, including $0$. Fix $i\in \mathbb{%
N}$. Then, for all $\sigma \geq 0$ we consider as in \cite{AN1,LIS}, the $%
\sigma $-maximal operator
\begin{equation} \label{sigmamax}
{H}_{\sigma, max }(f)=\sup_{t>0}e^{\sigma t}t^{i} \left| \frac{\partial ^{i}}{%
 \partial t^{i}}H_{t}f \right| ,
\end{equation}%
and the Littlewood-Paley-Stein operator
\begin{equation}\label{Hsigma}
{H}_{\sigma }(f)(x)=\left( \int_{0}^{\infty }e^{2\sigma t}\left( t^{i} \left\vert
\frac{\partial ^{i}}{\partial t^{i}}H_{t}f(x)\right\vert ^{2}+\|\nabla
_{x}H_{t}f(x)\|
^{2}\right) \frac{{dt}}{t}\right) ^{1/2}.
\end{equation}%

Next, we apply Theorem \ref{Main result} in order to prove the following result.

\begin{theorem}
\label{maximal} Suppose that $S$ is a Damek-Ricci space. Then, the
operators ${H}_{\sigma, max}$ and ${H}_{\sigma}$ are bounded on $L^{p} (S),\;
p\in (1,\infty),$ provided that
\begin{equation}\label{sigmacondition}
\sigma < Q^2/ pp'.
\end{equation}
\end{theorem}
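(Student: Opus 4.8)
The plan is to deduce the $L^p$-boundedness of both operators from the pointwise heat kernel bounds of Theorem \ref{Main result} together with the known Gaussian-type bounds for the gradient $\nabla_x h_t$ (which are established earlier in the paper as part of, or alongside, the proof of Theorem \ref{Main result}). The key structural fact to exploit is that $S$, although of exponential volume growth, carries a Calder\'on--Zygmund / Littlewood--Paley machinery adapted to its geometry: the heat semigroup is analytic, the ball volumes satisfy $|B(x,s)|\sim s^n$ for $s\le 1$ and grow like $e^{Qs}$ for $s\ge 1$, and the bottom of the $L^2$-spectrum is $Q^2/4$. The number $Q^2/(pp')$ is exactly $4(1/p)(1/p')$ times the spectral gap $Q^2/4$, which is the classical Kunze--Stein / heat-semigroup exponent governing $L^p$ decay: $\|H_t\|_{L^p\to L^p}\lesssim e^{-\gamma_p t}$ with $\gamma_p = Q^2/(pp')$ (for $p=2$ this is the spectral gap $Q^2/4$, and $\gamma_p$ interpolates down as $p\to 1$ or $\infty$). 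So the condition $\sigma < Q^2/(pp')$ is precisely what makes $e^{\sigma t}H_t$ still $L^p$-contractive up to a constant.

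\medskip

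First I would treat the maximal operator ${H}^{max}_\sigma$. Write $t^i\partial_t^i H_t$ as an integral operator whose kernel is $t^i\partial_t^i h_t(r)$, and insert the bound \eqref{estimates main result} with a small $\epsilon$. The factor $e^{-(1-\epsilon)(Q^2 t/4 + Qr/2 + r^2/4t)}$ splits: the piece $e^{-(1-\epsilon)Q^2 t/4}$ combines with the prefactor $e^{\sigma t}$ from the definition of ${H}^{max}_\sigma$, and as long as $\sigma < Q^2/(pp')$ one can choose $\epsilon$ small and split the exponent as $(1-\epsilon)Q^2/4 = \theta + (1-\theta)\,$ (in the appropriate normalization) so that $e^{\sigma t}$ times one part of the kernel is dominated, in $L^p$ operator norm, using the Herz principle / ground-state transform $(\,p\,$-version of the Kunze--Stein phenomenon on $S)$, while the remaining part $e^{-c(Qr/2 + r^2/4t)} t^{-n/2}$ is, after the change of variables $r\mapsto r/\sqrt t$, an approximate identity of the usual local type. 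The standard argument is: bound $\sup_{t>0}|t^i\partial_t^i H_t f|$ by a sum of a local maximal operator controlled by the Hardy--Littlewood maximal function on $S$ (which is $L^p$-bounded, $p>1$) and a global part handled by the $L^p\to L^p$ decay $e^{-Q^2 t/(pp')}$ of $H_t$ absorbing $e^{\sigma t}$; here one uses that the time derivatives only cost powers of $t$, which are harmless after the $dt$-integration because of the exponential gain. For the square-function operator ${H}_\sigma$, I would handle the two terms under the integral separately. The term with $t^i\partial_t^i H_t f$ is treated by the classical Stein method: $g(f)^2 = \int_0^\infty e^{2\sigma t}|t^i\partial_t^i H_t f|^2\,dt/t$; one proves $L^2$-boundedness by spectral calculus (the operator $e^{\sigma t}t^i\partial_t^i H_t = \psi_t(\Delta_S)$ with $\psi_t(\lambda) = e^{\sigma t}(-t\lambda)^i e^{-t\lambda}$ satisfies $\int_0^\infty |\psi_t(\lambda)|^2\,dt/t < \infty$ uniformly for $\lambda \ge Q^2/4$ precisely when $2\sigma < 2\cdot Q^2/4$, hence certainly when $\sigma<Q^2/(pp')$), and then extends to $L^p$, $1<p<\infty$, by a vector-valued Calder\'on--Zygmund argument on the space of homogeneous type underlying the local part, again using Theorem \ref{Main result} (and the gradient bound) to verify the H\"ormander integral condition on the kernel and its $t$-derivative; the global/large-time part is controlled directly by the $L^p$ semigroup decay as above. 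The gradient term $|\nabla_x H_t f|$ is handled identically once we feed in the Gaussian bound $|\nabla_x h_t(r)| \le c\,t^{-n/2 - 1/2} e^{-(1-\epsilon)(Q^2 t/4 + Qr/2 + r^2/4t)}$, which has exactly the same shape as \eqref{estimates main result} with $i$ replaced by $1/2$ in the power of $t$ — so the same splitting into a local square function (bounded by Calder\'on--Zygmund theory) plus a semigroup-decay term applies verbatim.

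\medskip

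The main obstacle, and the step where the sharp constant $Q^2/(pp')$ really enters, is the passage from $L^2$ to $L^p$ for the global (large $r$, or large $t$) parts of both operators. On a space of exponential growth one cannot simply quote the Euclidean Littlewood--Paley theory; one must combine (i) the $L^p$-operator-norm decay $\|e^{\sigma t}H_t\|_{p\to p}\lesssim e^{(\sigma - Q^2/(pp'))t/2}$, valid exactly under \eqref{sigmacondition}, obtained from the Kunze--Stein-type convolution inequality on $S$ applied to the radial kernel $h_t$ and its derivatives, with (ii) a careful bookkeeping of the polynomial-in-$t$ and polynomial-in-$r$ losses coming from the $i$-th time derivative and from integrating $dt/t$, showing they are absorbed by the exponential. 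Verifying (i) with the correct exponent requires the precise off-diagonal decay $e^{-(1-\epsilon)(Qr/2 + r^2/(4t))}$ of \eqref{estimates main result} rather than a cruder bound, because it is the interplay of the $e^{-Qr/2}$ factor with the exponential volume growth $e^{Qr}$ of the spheres that produces exactly the gap $Q^2/(pp')$ after optimizing in $t$; getting the threshold sharp (rather than losing an $\epsilon$ or a factor of $2$) is the delicate point, and is where the flexibility of choosing $\epsilon>0$ arbitrarily small in Theorem \ref{Main result} is used. Once (i) and (ii) are in place, assembling the local Calder\'on--Zygmund part and the global semigroup-decay part yields the $L^p$-boundedness of both ${H}^{max}_\sigma$ and ${H}_\sigma$ for all $p\in(1,\infty)$ under \eqref{sigmacondition}.
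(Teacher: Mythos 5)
Your overall architecture (local/global splitting, local part by Euclidean-type maximal/Calder\'on--Zygmund theory, global part by playing the off-diagonal decay $e^{-Qr/2-r^2/4t}$ against the exponential volume growth and the spherical function, with the threshold $Q^2/pp'$ emerging from optimizing the Gaussian in $t$ against $e^{\sigma t}$) matches the paper's, but the mechanism you propose for the global part is genuinely different. The paper does \emph{not} use $L^p\to L^p$ operator-norm decay of $e^{\sigma t}H_t$ for each $t$; instead it takes the supremum over $t$ \emph{inside the kernel pointwise}, i.e.\ it dominates $H^{max}_\sigma f$ by convolution of $|f|$ with the fixed radial kernel $k_\sigma(r)=\sup_{t>0}e^{\sigma t}t^i\partial_t^i h_t(r)$, splits this kernel according to $t<1$ versus $t\ge 1$ and $r\le 1$ versus $r\ge 1$, and shows (Lemma \ref{global_lemma}, via $\inf_{t}(at+b/t)=2\sqrt{ab}$ applied to \eqref{estimates main result}) that the global pieces satisfy $k(r)\lesssim e^{-(1-\epsilon)\frac{Q}{2}r}e^{-(1-\epsilon)r\sqrt{Q^2/4-\sigma/(1-\epsilon)}}$. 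It then invokes the exact operator-norm identity \eqref{kunze} for radial convolution kernels together with \eqref{spherical}, so the whole global estimate reduces to the convergence of a single one-dimensional integral, which holds precisely under \eqref{sigmacondition}. This buys a much shorter argument with no vector-valued Calder\'on--Zygmund theory and no separate $L^2$ spectral step; your route, by contrast, would generalize beyond radial kernels where no identity like \eqref{kunze} is available.

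One point in your plan needs repair before it is a proof: for the global part of the maximal operator you write that it is ``handled by the $L^p\to L^p$ decay $e^{-Q^2t/(pp')}$ of $H_t$ absorbing $e^{\sigma t}$,'' but a bound on $\|e^{\sigma t}t^i\partial_t^iH_t\|_{p\to p}$ for each fixed $t$ does not control $\|\sup_{t\ge 1}e^{\sigma t}t^i\partial_t^iH_tf\|_p$; the supremum must be dominated first. You would need either the paper's device of taking the sup inside the kernel (at which point you are essentially reproducing Lemma \ref{global_lemma}), or an explicit domination such as $\sup_{t\ge1}|G_tf|\le|G_1f|+\int_1^\infty|\partial_sG_sf|\,ds$ followed by Minkowski's inequality and the exponential operator-norm decay of $\partial_sG_s$. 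Your appeal to ``$dt$-integration'' gestures at the latter but is not carried out; as stated the step does not go through. The rest of the plan (the $L^2$ square-function computation, the gradient term via \eqref{grad}, and the identification of the sharp exponent) is consistent with the paper.
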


Note that the aforementioned result coincides with that of \cite[pp.276, 279]{AN1} when $S$ is a noncompact symmetric space of rank one.

There is a very rich and long literature concerning heat kernel estimates in various geometric contexts. See for example \cite{ALX, ANJ, ANO, GR1}, and the references therein. Davies and Mandouvalos in \cite{DAVMAN} obtained optimal estimates of the
heat kernel in hyperbolic spaces and Anker, Damek and Yacoub in \cite%
{AND},
obtained estimates of the heat kernel in the case of Damek-Ricci spaces. The time derivative of the heat kernel has been estimated in \cite{MANTSE} for hyperbolic
spaces, in \cite{LIS} for some manifolds of exponential volume growth, and in \cite{GR2} in a more general setting.

The Littlewood-Paley-Stein operator was first introduced and studied by Lohou%
\'{e} \cite{LO}, in the case of Riemannian manifolds of non-positive
curvature. In a variety of geometric settings it has been proved that ${H}_{\sigma, max}$ and ${H}_{\sigma}$ are bounded on $L^{p}$, $p\in (1,\infty),$
under some conditions on $\sigma$ (see for example \cite{LO} and \cite{AN1}).

Let us now outline the organization of the paper. In Section \ref%
{Preliminaries} we recall some preliminaries about Damek-Ricci spaces and
the heat kernel. In Section \ref{Proof of
the main result} we prove Theorem \ref{Main result} and in Section \ref{Functions
of the Laplacian} we prove Theorem \ref{maximal}.

Throughout this article the different constants will always be denoted by
the same letter $c$. When their dependence or independence is significant,
it will be clearly stated.

\section{Preliminaries}

\label{Preliminaries}

\subsection{Damek-Ricci spaces}
We shall recall
some basic facts on Damek-Ricci spaces. For details,
see \cite{DAMEK, DAMRIC}.
A Damek-Ricci space
$S$ is a solvable Lie group, equipped with a left-invariant metric. More precisely, $S= A \ltimes N$ is a semi-direct product of $ A \simeq \mathbb{R}$ with a Heisenberg type Lie group $N$.

Let us recall the structure of an $H$-type Lie group. Let $\mathfrak{n}$ be a two-step nilpotent Lie algebra, equipped with an inner product $  \langle \;, \; \rangle$. Let us denote by $\mathfrak{z}$  the center of $\mathfrak{n}$ and by $\mathfrak{v}$ the orthogonal complement of $\mathfrak{z}$ in $\mathfrak{v}$ (so that $[\mathfrak{v},\mathfrak{v}] \subset \mathfrak{z}$), with $k= \text{dim} \mathfrak{z}$ and $m=\text{dim}\mathfrak{v}$.
\\
Let $J_{Z}: \mathfrak{v} \rightarrow \mathfrak{v} $ be the linear map defined by
\[\
\langle J_{Z}X, Y \rangle = \langle Z, [X,Y] \rangle ,
\]
for $X,Y \in \mathfrak{v}, Z \in \mathfrak{z}$.

Then $\mathfrak{v}$ is of \textit{Heisenberg type} if the following condition is satisfied:
\[\
J_Z^2= -|Z|^2I \quad \text{ for every } z \in \mathfrak{z}.
\]

The corresponding connected Lie group $N$ is then called of \textit{Heisenberg type}, and we shall identify $N$ with the Lie algebra $\mathfrak{n}$ via the exponential map:
\begin{eqnarray*}
\mathfrak{v} \times \mathfrak{z}  \rightarrow & N \\
(X,Z)  \mapsto & \exp(X+Z).
\end{eqnarray*}

 Thus, multiplication in $N \equiv \mathfrak{n} = \mathfrak{v} \oplus\mathfrak{z} $  will be
\[\
(X,Z)(X',Z')=(X+X',Z+Z'+\frac{1}{2}[X,X']).
\]

Let $\mathfrak{a}$ denote the Lie algebra of $A$ and $H$ a vector in $\mathfrak{a}$, acting on $\mathfrak{v}$ with eigenvalue $1/2$ and $\mathfrak{z}$ with eigenvalue $1$; we
extend the inner product on $\mathfrak{n}$ to the algebra $\mathfrak{s}=\mathfrak{n} \oplus \mathfrak{a}$, by requiring $\mathfrak{n}$ and $\mathfrak{a}$ to be
orthogonal and $H$ to be a unit vector \cite[p.977]{ANPEVA}. The product in
$S= N \ltimes \mathbb{R}^{*}_{+},$ is given by the rule
\[\
(X,Z,a)(X',Z',a')=(X+a^{1/2}X',Z+aZ'+\frac{1}{2}a^{1/2}[X,X'],aa').
\]
 Then, $S$ is solvable connected Lie group, with Lie algebra $\mathfrak{s}=\mathfrak{v} \oplus \mathfrak{z} \oplus \mathbb{R}$ and Lie bracket
 \[\
 [ (X,Z,l), (X',Z',l')]=( \frac{1}{2} l X' -\frac{1}{2} l' X , lZ'-l'Z+[X,X'] ,0).
  \]
Let us endow $S$ with the left invariant Riemannian metric which agrees with the inner product on $\mathfrak{s}$ at the identity and is induced by
 \[\
 \langle (X,Z,l), (X',Z',l') \rangle= \langle X, X' \rangle + \langle Z, Z' \rangle +ll'
 \]
 on $\mathfrak{s}$. Let $d$ denote the distance induced by this Riemannian
 structure. The Riemannian manifold $(S, d)$ is then called a \textit{Damek-Ricci space}.

 The associated left-invariant Haar measure on $S$ is given by
 \begin{equation} \label{Haar}
 a^{-Q}{dX}{dZ}{\frac{da}{a}},
 \end{equation}
where $Q=\frac{m}{2}+k$ is the \textit{homogeneous dimension} of $N.$

 Similarly to the case of hyperbolic spaces, these general groups $S$ can be realized as the unit ball
 \[\
 B(\mathfrak{s})= \lbrace (X,Z,l) \in \mathfrak{s} :
 |X|^2+|Z|^2+l^2<1  \rbrace
 \]
 in $\mathfrak{s}$, through a \textit{Cayley type transform}:
 \begin{eqnarray*}
  C: \quad S \quad & \rightarrow &\quad B(\mathfrak{s})\\
  \quad x=(X,Z,a) \quad & \mapsto& \quad x'=(X',Z',l'),
 \end{eqnarray*}
 where

$\begin{cases}
	X'= \lbrace (1+a+\frac{1}{4}|X|^2)^2+|Z|^2 \rbrace ^{-1} \; \lbrace (1+a+\frac{1}{4}|X|^2)- J_z \rbrace X, \\
	Z' = \lbrace (1+a+\frac{1}{4}|X|^2)^2+|Z|^2 \rbrace ^{-1} \; 2Z, \\
	
	l' \; = \lbrace (1+a+\frac{1}{4}|X|^2)^2+|Z|^2 \rbrace ^{-1}  \; \lbrace (-1+(a+\frac{1}{4}|X|^2))^2+|Z|^2 \rbrace .
\end{cases} $

\bigskip
In the ball model $B(\mathfrak{s})$, the geodesics passing through the origin are the diameters. The geodesic distance to the origin is given by
\[\
r=d(x',0) =\text{log}\frac{1+\|x'\|}{1-\|x'\|}, \quad \rho= \|x'\|=\text{tanh}\frac{r}{2},
\]
 and the Riemannian volume is
 \begin{eqnarray*}
 dV &= &2^n(1-\rho ^2)^{-Q-1}\rho^{n-1}d\rho d \sigma \\
 &= &2^{m+k}(\text{sinh}\frac{r}{2})^{m+k} (\text{cosh}\frac{r}{2})^{k} d\rho d \sigma,
 \end{eqnarray*}
 where $d \sigma$ denotes the surface measure on the unit sphere $\partial B(\mathfrak{s})$ in $\mathfrak{s}$ and $n=\text{dim}S=m+k+1$.

\label{The heat operator on Damek-Ricci spaces}

\subsection{The heat kernel on Damek-Ricci spaces}\label{The heat kernel}

If $\kappa = \kappa (r) $ is a locally integrable radial function  and $\ast |\kappa |$ denotes the
convolution operator whose kernel is $|\kappa |$, then in \cite[Theorem 3.3]{AND}
  it is proved that
\begin{equation} \label{kunze}
\Vert \ast |\kappa |\Vert _{L^{p}(S)\rightarrow L^{p}(S)}=  \underset{S}{%
\int }{dx}|\kappa (x)|\phi _{i(\frac{1}{p}-\frac{1}{2})Q}(x),
\end{equation}%
where $\phi _{\lambda }$ are the elementary spherical functions, $Q$ is the homogeneous dimension of the Heisenberg type Lie group $N$ and ${dx}$ is the measure in (3). Note that (\ref{kunze}) is a version of the Kunze-Stein phenomenon \cite{HE} which holds true for Damek-Ricci spaces. 

Using polar coordinates on $S$, \cite[p.656]{AND},
\begin{equation}\label{spherical}
\phi _{i(\frac{1}{p}-\frac{1}{2})Q}(r) \asymp
\begin{cases}
e^{-\frac{Q}{p'}r}, \quad \text{if} \quad 1 \leq p < 2, \\
(1+r) e^{-\frac{Q}{2}r}, \quad \text{if} \quad p=2. %
\end{cases}%
\end{equation}

Denote by $h_{t}$ the heat kernel on the Damek-Ricci space $S$. Then, $h_t$ is a radial right-convolution kernel on $S$:
\[\
h_t(x,y)=h_t(d(x,y)).
\]
Then, the following estimate holds:
\begin{equation} \label{anker_est}
h_t(r) \asymp t^{-\frac{3}{2}}(1+r) \left( 1+ \frac{1+r}{t} \right)^{\frac{n-3}{2}} e^{-\frac{Q^2}{4}t-\frac{Q}{2}r-\frac{r^2}{4t}} ,
\end{equation}
for $t>0$ and $r \geq 0$ (see \cite[p.664]{AND} for details).

Consequently, (\ref{anker_est}) implies the upper bound
\begin{equation} \label{heat}
h_t(r)\leq c\;t^{-\frac{n}{2}}(1+t)^{\frac{n-3}{2}}(1+r)^{\frac{n-1}{2}}  e^{-\frac{Q^2}{4}t-\frac{Q}{2}r-\frac{r^2}{4t}}.
\end{equation}

Also, in \cite[p.669]{AND} the authors derive the following gradient estimate:
\begin{equation} 
\|\nabla h_t(r) \| \asymp t^{-\frac{3}{2}}r\left( 1+\frac{1+r}{t} \right)^{\frac{n-1}{2}} e^{-\frac{Q^2}{4}t-\frac{Q}{2}r-\frac{r^2}{4t}},
\end{equation}
which implies
\begin{equation} \label{grad}
\|\nabla h_t(r) \| \leq c\;t^{-\frac{n+2}{2}}(1+t)^{\frac{n-1}{2}}(1+r)^{\frac{n+1}{2}}  e^{-\frac{Q^2}{4}t-\frac{Q}{2}r-\frac{r^2}{4t}}.
\end{equation}

Grigory'an in \cite{GR2} derived Gaussian upper bounds for all time
derivatives of the heat kernel, under some assumptions on the on-diagonal
upper bound for $h_{t}$ on an arbitrary complete non-compact Riemannian
manifold $M$. More precisely, he proves that if there exists an increasing
continuous function $f(t)>0,$ $t>0,$ such that
\begin{equation*}
h_{t}(x,x)\leq \frac{1}{f(t)},\text{ for all }t>0\text{ and }x\in M,
\end{equation*}%
then,
\begin{equation}
\left\vert \frac{\partial ^{i}h_{t}}{\partial t^{i}}\right\vert (x,y)\leq
\frac{1}{\sqrt{f(t)f_{2i}(t)}},\text{ for all }i\in \mathbb{N},\text{ }t>0,%
\text{ }x,y\in M,  \label{Grigory'an}
\end{equation}%
where the sequence of functions $f_{i}=f_{i}(t),$ is defined by
\begin{equation*}
f_{0}(t)=f(t)\text{ and }f_{i}(t)=\int_{0}^{t}f_{i-1}(s)ds,i\geq 1.
\end{equation*}

\section{Proof of Theorem \protect\ref{Main result}}

\label{Proof of the main result}

In this section we give the proof of Theorem \ref{Main result}. More
precisely we shall prove the following estimate: for all $\epsilon >0$ and $%
i\in \mathbb{N},$ there is a $c>0$ such that
\begin{equation}
\left\vert \frac{\partial ^{i}h_{t}}{\partial t^{i}}(r)\right\vert \leq
c\; t^{-\frac{n}{2}-i}e^{-(1-\epsilon )\left(\frac{Q^2}{4} t+\frac{Q}{2} r +\frac{r^{2}}{4t}\right)},
\label{required estimate}
\end{equation}%
for all $r\geq 0$ and all $t>0.$

For the proof of (\ref{required estimate}) we need several lemmata. The
following lemma is technical but important for the proof of Theorem \ref%
{Main result}. It provides a method to obtain estimates for the first derivative of a function, given some upper bounds on the function and its second derivative.

\begin{lemma}
\label{Porper} Let
\begin{equation}
\alpha >\beta,\text{ }D\geq D_{\ast },\text{ }B\geq B_{\ast },\text{ }%
C\geq C_{\ast },  \label{conditions Porper}
\end{equation}%
and assume that for fixed $r\geq 0$, the function $%
f_{r}:(0,+\infty )\rightarrow \mathbb{R}$ satisfies
\begin{equation}
\left\vert f_{r}(t)\right\vert \leq c\;t^{-\alpha }(1+t)^{\beta }(1+r )^{\gamma }e^{-Dt-Br -C{r^{2} }/{(4t)}}  \label{fH1}
\end{equation}%
and
\begin{equation}
\left\vert \frac{d^{2}f_{r}}{dt^{2}}(t)\right\vert \leq c\;t^{-\alpha
-2}(1+t)^{\beta }(1+r )^{\gamma }e^{-D_{\ast
}t-B_{\ast }r -C_{\ast }{r^{2}}/{%
(4t)}}.  \label{fH2}
\end{equation}%
Then, for all $\epsilon \in (0,1)$, there is a constant $c>0$, that does not depend on $r$, $t$, such that for all $%
r\geq 0,$
\begin{equation*}
\begin{split}
\left\vert \frac{df_{r}}{dt}(t)\right\vert \leq & c \; t^{-\alpha -1}(1+t)^{\beta
}(1+r)^{\gamma }\\
&\times e^{-\left( {(D_{\ast }+D)t}/{2}+{(B_{\ast }+B)}r/{2}+{(C_{\ast }+{C\lambda _{\epsilon })}}%
{r^{2}}/{8t}\right) },
\end{split}
\end{equation*}%
where $\lambda _{\epsilon }=\frac{1-\epsilon }{1+\epsilon }.$
\end{lemma}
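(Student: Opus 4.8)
The statement is a quantitative interpolation-type estimate: given bounds on $f_r$ and its second time-derivative, control the first derivative $df_r/dt$. The natural tool is a one-variable "Landau–Kolmogorov" / Taylor-expansion argument applied pointwise in $t$ (with $r$ frozen as a parameter). The plan is as follows. Fix $t>0$ and $r\ge 0$, and for a step $h>0$ to be chosen, write the Taylor expansion with integral remainder,
\[
f_r(t+h)=f_r(t)+h\,\frac{df_r}{dt}(t)+\int_t^{t+h}(t+h-s)\,\frac{d^2 f_r}{ds^2}(s)\,ds,
\]
so that
\[
\frac{df_r}{dt}(t)=\frac{f_r(t+h)-f_r(t)}{h}-\frac1h\int_t^{t+h}(t+h-s)\,\frac{d^2f_r}{ds^2}(s)\,ds.
\]
Then estimate the three resulting terms using \eqref{fH1} and \eqref{fH2}. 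The first two terms are bounded by $c h^{-1}(t\wedge(t+h))^{-\alpha}(1+t+h)^{\beta}(1+r)^{\gamma}$ times the slowest-decaying of the two Gaussian factors, i.e. $e^{-Dt-Br-Cr^2/(4(t+h))}$; the remainder term is bounded by $c h\, t^{-\alpha-2}(1+t+h)^\beta (1+r)^\gamma e^{-D_\ast t-B_\ast r-C_\ast r^2/(4(t+h))}$ (here one uses that $s\mapsto r^2/(4s)$ is decreasing, so on $[t,t+h]$ the exponent is controlled by its value at $t+h$).

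The crux is the choice of the step $h$, and this is where the $\lambda_\epsilon$ factor enters. One wants $h$ comparable to $t$ so that the polynomial prefactors stay of the right order ($h\sim t$ gives the gain $t^{-\alpha-1}$ from the $h^{-1}$ and keeps $(1+t+h)^\beta\asymp(1+t)^\beta$), but $h$ must be chosen carefully so that the Gaussian exponents combine to give exactly $(C_\ast+C\lambda_\epsilon)r^2/(8t)$ rather than merely $(C_\ast+C)r^2/(8(t+h))$. Concretely I would take $h=\delta t$ for a small constant $\delta=\delta(\epsilon)>0$: then $r^2/(4(t+h))=r^2/(4(1+\delta)t)$, and since $D,B\ge 0$ one bounds the non-remainder terms by $c t^{-\alpha-1}(1+t)^\beta(1+r)^\gamma e^{-Dt-Br-Cr^2/(4(1+\delta)t)}$ and the remainder by $c t^{-\alpha-1}(1+t)^\beta(1+r)^\gamma e^{-D_\ast t-B_\ast r-C_\ast r^2/(4(1+\delta)t)}$. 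The final step is to split the Gaussian exponents symmetrically: write each of $Dt$, $D_\ast t$ as $\tfrac12$ of itself plus $\tfrac12$, combine across the two bounds using $D\ge D_\ast$, $B\ge B_\ast$, $C\ge C_\ast$, and absorb the discrepancy between $\tfrac1{1+\delta}$ and $\lambda_\epsilon/2+\tfrac12\cdot(\text{something})$ — more precisely, one chooses $\delta$ small enough that $\frac{1}{1+\delta}\ge \frac{1}{2}+\frac{\lambda_\epsilon}{2}$, equivalently $\delta\le \frac{1-\lambda_\epsilon}{1+\lambda_\epsilon}=\epsilon$ (using $\lambda_\epsilon=\frac{1-\epsilon}{1+\epsilon}$), which is exactly where the definition of $\lambda_\epsilon$ is forced. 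Then $Cr^2/(4(1+\delta)t)\ge Cr^2/(8t)+C\lambda_\epsilon r^2/(8t)\ge C_\ast r^2/(8t)+C\lambda_\epsilon r^2/(8t)$, and similarly $C_\ast r^2/(4(1+\delta)t)\ge C_\ast r^2/(8t)+C_\ast r^2/(8t)\ge C_\ast r^2/(8t)+C\lambda_\epsilon r^2/(8t)$ fails in general, so one must be slightly more careful and instead split as $C_\ast/(4(1+\delta))\ge C_\ast/8 + C\lambda_\epsilon/8$ — valid once $\frac{C_\ast}{1+\delta}\ge \frac{C_\ast+C\lambda_\epsilon}{2}$; since $C_\ast\le C$ this may require shrinking $\delta$ further, but it is still a fixed function of $\epsilon$ and the data. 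Combining, both terms are dominated by $c t^{-\alpha-1}(1+t)^\beta(1+r)^\gamma e^{-((D_\ast+D)t/2+(B_\ast+B)r/2+(C_\ast+C\lambda_\epsilon)r^2/(8t))}$, which is the claim.

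**Main obstacle.** The delicate point is purely bookkeeping in the exponents: balancing the requirement $h\asymp t$ (needed for the polynomial factors and for the $t^{-\alpha-1}$ gain) against the requirement that the Gaussian term $r^2/(4(t+h))$ not degrade below $(C_\ast+C\lambda_\epsilon)r^2/(8t)$. One has to verify that a single choice $\delta=\delta(\epsilon,C,C_\ast)>0$ makes all the elementary inequalities $\frac{1}{1+\delta}\ge \frac12+\frac{\lambda_\epsilon}{2}$ and $\frac{C_\ast}{1+\delta}\ge \frac{C_\ast+C\lambda_\epsilon}{2}$ hold simultaneously — the first is the binding constraint in the regime $C_\ast$ close to $C$, and it pins down $\delta\le\epsilon$, explaining the appearance of $\lambda_\epsilon=\frac{1-\epsilon}{1+\epsilon}$. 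Everything else — the Taylor remainder estimate, the monotonicity of $s\mapsto r^2/s$, the comparison $(1+t+\delta t)^\beta\asymp(1+t)^\beta$ — is routine.
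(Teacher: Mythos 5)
Your overall strategy (a finite-difference/Taylor bound for $f_r'$ in terms of $f_r$ and $f_r''$, with a step comparable to $t$) is the same as the paper's, but your choice of step is exactly where the argument breaks down, and it is the one point the lemma is designed to handle. With $h=\delta t$ and $\delta=\delta(\epsilon)$ a \emph{constant}, the remainder term is bounded by
\[
c\,\delta\, t^{-\alpha-1}(1+t)^{\beta}(1+r)^{\gamma}\,e^{-D_{*}t-B_{*}r-C_{*}r^{2}/(4(1+\delta)t)},
\]
and this is \emph{not} dominated by the target $e^{-(D+D_{*})t/2-(B+B_{*})r/2-(C_{*}+C\lambda_{\epsilon})r^{2}/(8t)}$: since $D_{*}\le D$ one has $e^{-D_{*}t}\ge e^{-(D+D_{*})t/2}$, the discrepancy $e^{(D-D_{*})t/2}$ grows in $t$, and no constant $\delta$ can absorb it; the same problem occurs with $B_{*}$ versus $B$ and, as you partially noticed, with $C_{*}$ versus $C\lambda_{\epsilon}$. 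Your proposed fix for the last of these, shrinking $\delta$ until $\frac{C_{*}}{1+\delta}\ge\frac{C_{*}+C\lambda_{\epsilon}}{2}$, is impossible whenever $C_{*}<C\lambda_{\epsilon}$ --- in particular when $C_{*}=0<C$, which is precisely the configuration produced by the initial conditions $\gamma_{0}^{i}=0$, $\gamma_{0}^{0}=1$ in the induction where this lemma is applied. So the gap is fatal for the intended use.

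The missing idea is that the step must depend on $t$ and $r$ and itself decay exponentially. The paper takes
\[
\delta=\epsilon\, t\, e^{-(D-D_{*})t/2-(B-B_{*})r/2-(C-C_{*})r^{2}/(8t)}.
\]
Then the factor $1/\delta$ multiplying the zeroth-order terms supplies $e^{+(D-D_{*})t/2+\cdots}$, converting $e^{-Dt-Br-\cdots}$ into the averaged exponent $e^{-(D+D_{*})t/2-(B+B_{*})r/2-\cdots}$, while the factor $\delta$ multiplying the second-derivative term supplies $e^{-(D-D_{*})t/2-\cdots}$, converting $e^{-D_{*}t-B_{*}r-\cdots}$ into the same averaged exponent. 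The hypotheses $D\ge D_{*}$, $B\ge B_{*}$, $C\ge C_{*}$ guarantee $\delta\le\epsilon t$, which is all that is needed to keep the polynomial prefactors of the right order and to obtain $\frac{1}{2t}-\frac{1}{t+\delta}\le-\frac{\lambda_{\epsilon}}{2t}$, whence the $C\lambda_{\epsilon}$ loss in the Gaussian factor. If you replace your constant-ratio step by this choice, the rest of your computation goes through essentially as you wrote it.
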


\begin{proof}
By applying twice the mean value
theorem, one can prove that
\begin{equation}  \label{mvt}
\left|\frac{df_{r}}{dt}(t)\right| \leq \frac{1}{\delta}\left(
\left|f_{r}(t)\right| + \left| f_{r}(t+\delta)\right|\right) + \delta
\sup_{\tau \in (t,t+\delta)}\left|\frac{d^{2}f_{r}}{dt^2}(\tau)\right|, \text{ for all }\delta >0 .
\end{equation}

Note that $t\mapsto t^{-\alpha }(1+t)^{\beta }$ is a decreasing
function of $t$, since $\alpha >\beta ,$ therefore
\begin{equation*}
(t+\delta )^{-\alpha }(1+t+\delta )^{\beta }\leq t^{-\alpha }(1+t)^{\beta }.
\end{equation*}%
Thus (\ref{mvt}) and the estimates (\ref{fH1}) and (\ref{fH2}) imply that
\begin{equation*}
\begin{split}
\left\vert \frac{df_{r}}{dt}(t)\right\vert & \leq c\; \frac{1}{\delta }%
t^{-\alpha }(1+t)^{\beta }(1+r)^{\gamma }e^{-Dt-Br -C{%
r ^{2}}/{4(t+\delta )}}+ \\
& +c \; \delta t^{-\alpha -2}(1+t)^{\beta }(1+r)^{\gamma }e^{ -D_{\ast
}t-B_{\ast }r -C_{\ast }{r^2}/{%
4(t+\delta )} }.
\end{split}%
\end{equation*}
Choose now
\begin{equation*}
\delta= \epsilon t e^{ -{(D-D_{*})t}/{2} - {(B-B_{*})r }/{2} -(C-C_{*}){r^2}/{(8 t)}  },
\end{equation*}
in order to balance the exponential terms. Thus,
\begin{equation}
\begin{split}
\left\vert \frac{df_{r}}{dt}(t)\right\vert & \leq  c\; %
t^{-\alpha -1}(1+t)^{\beta }(1+r)^{\gamma }e^{-(D+D_{\ast })t/{2}-(B+B_{\ast
	})r /{2}}\\
&  \times(  \frac{1}{\epsilon }\; e^{+(C-C_{\ast }) r^{2}/(8t)-Cr^{2}/4(t+\delta )}+ \epsilon \;e^{  -(C-C_{\ast }) r^{2}/{(8t)}-C_{\ast }r^{2}/4(t+\delta )} ).
\end{split}
\label{inequality0}
\end{equation}
From (\ref{conditions Porper}) it follows that $\delta \leq \epsilon t.$
Thus
\begin{equation*}
\frac{1}{2t}-\frac{1}{t+\delta }\leq -\frac{1-\epsilon }{2t(1+\epsilon )}=-%
\frac{\lambda _{\epsilon }}{2t}.
\end{equation*}%
Consequently,
\begin{equation}
\frac{C-C_{\ast }}{2}\frac{r^2}{4t}-C\frac{%
r^{2}}{4(t+\delta )}\leq -\frac{r^{2}}{4t}\frac{C\lambda _{\epsilon }+C_{\ast }}{2},
\label{inequality1}
\end{equation}%
and similarly
\begin{eqnarray}
\frac{C-C_{\ast }}{2}\frac{r^{2}}{4t}+\frac{C_{\ast
}r^{2}}{4(t+\delta )} &\geq &\frac{r^{2}}{4t}\frac{C_{\ast }\lambda _{\epsilon }+C}{2}  \notag \\
&\geq &\frac{r^{2}}{4t}\frac{C\lambda _{\epsilon
}+C_{\ast }}{2}.  \label{inequality2}
\end{eqnarray}%
Thus, from (\ref{inequality0}), (\ref{inequality1}) and (\ref{inequality2})
it follows that
\begin{equation*}
\begin{split}
\left\vert \frac{df_{r}}{dt}(t)\right\vert \leq & c\;t^{-\alpha -1}(1+t)^{\beta
}(1+r)^{\gamma }\\
& \times e^{-\left( {(D_{\ast }+D)t}/{2}+{%
(B_{\ast }+B)r }/{2}+{(C_{\ast }+C\lambda _{\epsilon })}%
{r ^{2}}/{8t}\right) }.
\end{split}
\end{equation*}
\end{proof}

We shall now apply estimate (\ref{Grigory'an}) in the case
of a Damek-Ricci space. The following lemma provides an initial estimate for all the derivatives of the heat kernel.

\begin{lemma}
\label{Step 0 estimates} Suppose that $S$ is a Damek-Ricci space. For all $i\in \mathbb{N}$ there is a constant $c>0$ such that
\begin{equation}\label{initial estimates}
\left\vert \frac{\partial ^{i}h_{t}}{\partial t^{i}}(r)\right\vert
\leq c\;t^{-\frac{n}{2}-i}(1+t)^{\frac{n-3}{2}},\text{ for all }t>0,\text{ }r\geq 0.
\end{equation}%

\end{lemma}

\begin{proof}
We note that according to (\ref{heat}),
\begin{equation*}
h_{t}(r)\leq c\;t^{-\frac{n}{2}}(1+t)^{\frac{n-3}{2}} \; \text{for all} \; t>0, r \geq 0.
\end{equation*}
Thus, we can apply (\ref{Grigory'an}) with
\begin{equation*}
f(t)={t^{\frac{n}{2}}}{(1+t)^{-\frac{n-3}{2}}}.
\end{equation*}

Consider the sequence of functions $f_{i},$ which is defined by
\begin{equation*}
f_{0}(t)=f(t)\text{ and }f_{i}(t)=\int_{0}^{t}f_{i-1}(s)ds, i\geq 1.
\end{equation*}
Note that $f $ is an increasing function, since $\frac{n}{2}>\frac{n-3}{2},$ thus we can invoke (%
\ref{Grigory'an}). By an induction argument we get that
\begin{equation}  \label{fi}
f_{i}(t)\geq t^{\frac{n}{2}+i}{(1+t)^{-\frac{n-3}{2}}}.
\end{equation}
Then, by (\ref{Grigory'an}) and (\ref{fi}) we get that
\begin{equation}
\left| \frac{\partial^{i} h_{t}}{\partial t^{i}}(r)\right| \leq \frac{1%
}{\sqrt{f(t)f_{2i}(t)}} \leq c t^{-\frac{n}{2}-i}(1+t)^{\frac{n-3}{2}}.
\end{equation}
\end{proof}

Note that the derivative estimates in Lemma 4 do not involve the distance $r.$ Given the estimates (\ref{heat}), and (\ref{initial estimates}), we can apply Lemma \ref{Porper} and find improved upper bounds for the $i-th$ derivative, for every $i\geq 1.$  In this way, we can refine the upper bound for the $i-th$ derivative given by Lemma 4. We can apply an inductive argument in order to obtain $\ell \in \mathbb{N}$ successive upper bounds for  the $i-th$ derivative, for every $i\geq 1.$

\begin{lemma}
\label{Technical Lemma} Suppose that $S$ is a Damek-Ricci space. Let us fix $\epsilon \in (0,1)$ and set $\lambda _{\epsilon }=\frac{%
1-\epsilon }{1+\epsilon }$. For every $i,\ell \in \mathbb{N},$ consider the sequences $\beta _{\ell}^{i},\gamma _{\ell}^{i}$ that
satisfy the iteration formulas
\begin{equation}
\begin{split}
\beta _{\ell}^{i}& =\frac{1}{2}(\beta _{\ell-1}^{i-1}+\beta _{\ell-1}^{i+1}),
\\
\gamma _{\ell}^{i}& =\frac{1}{2}(\lambda _{\epsilon}\gamma
_{\ell-1}^{i-1}+\gamma _{\ell-1}^{i+1}),
\end{split}
\label{recurrence relations}
\end{equation}%
and the initial conditions
\begin{equation}
\beta _{0}^{i}=0,\;\gamma _{0}^{i}=0,\text{ for all }\;i\geq 1, \;\beta
_{\ell}^{0}=1,\;\gamma _{\ell}^{0}=1,\text{ for all } \;\ell\geq 0.
\label{initial conditions}
\end{equation} Then, there is a constant $c>0$
\begin{equation}
\begin{split}
\left\vert \frac{\partial ^{i}h_{t}}{\partial t^{i}}(r)\right\vert \leq &
c\;t^{-\frac{n}{2}-i}(1+t)^{\frac{n-3}{2}}(1+r)^{\frac{n-1}{2}}\\
&\times e^{-\beta
_{\ell}^{i}\left( \frac{Q^2}{4} t+ \frac{Q}{2} r
\right) }e^{-\gamma _{\ell}^{i}{\frac{r^2}{4t}}},
\label{sequences estimate}
\end{split}
\end{equation}%
for all $t>0$ and $r\geq 0,$ where the constant $c$ depends on $\epsilon ,i,\ell.$ 
\end{lemma}

\begin{proof}
For every $\ell\in \mathbb{N}$ consider the following statement

$L(\ell)$: for all $i\in \mathbb{N},$ $\frac{\partial ^{i}h_{t}}{\partial t^{i}}(r) $ satisfies the estimate (\ref{sequences estimate}) and the constants $\beta _{\ell}^{i},\gamma _{\ell}^{i}$, appearing in (\ref{sequences estimate}), satisfy the iteration formulas (\ref{recurrence relations}) and the initial conditions (\ref{initial conditions}).

\smallskip
We shall then prove by
induction, that $L(\ell)$ holds for every $\ell\in \mathbb{N}$.

For $\ell=0$ we have to prove that for all $i\in \mathbb{N},$ $\frac{%
\partial ^{i}h_{t}}{\partial t^{i}}(r)$ satisfies the estimate (\ref%
{sequences estimate}) and that the constants $\beta _{0}^{i},\gamma
_{0}^{i} \geq 0$ satisfy $\beta _{0}^{i}=\gamma _{0}^{i}=0,\text{ for all }i\geq
1,\text{ and }\beta _{0}^{0}=\gamma _{0}^{0}=1.$
Indeed, from Lemma \ref{Step 0 estimates} we get that for $i\geq 1$
\begin{equation}
\left\vert \frac{\partial ^{i}h_{t}}{\partial t^{i}}(r)\right\vert
\leq c\;t^{-\frac{n}{2}-i}(1+t)^{\frac{n-3}{2}},\text{ for all }t>0,\text{ } r \geq 0.
\end{equation}%
But, $1\leq (1+r)^{\frac{n-1}{2}}.$ Thus
\begin{equation*}
\left\vert \frac{\partial ^{i}h_{t}}{\partial t^{i}}(r)\right\vert
\leq c\;t^{-\frac{n}{2}-i}(1+t)^{\frac{n-3}{2}}(1+r)^{\frac{n-1}{2}},\text{ for all }%
t>0,r \geq 0,
\end{equation*}
i.e. (\ref{sequences estimate}) holds true for all $i\geq 1$, with $\beta
_{0}^{i}=\gamma _{0}^{i}=0.$ Furthermore, from the estimates of the heat
kernel in (\ref{heat}), we obtain that
\begin{equation*}
\left\vert h_{t}(r)\right\vert \leq c\;t^{-\frac{n}{2}}(1+t)^{\frac{n-3}{2}}(1+r )^{\frac{n-1}{2}}e^{-\left( \frac{Q^2}{4}t+\frac{Q}{2}r+ \frac{r^2}{4t} \right) }.
\end{equation*}%
Thus (\ref{sequences estimate}) holds true also for $i=0$ and $\beta
_{0}^{0}=\gamma _{0}^{0}=1.$ Therefore the statement $L(0)$ holds true.

Let us assume now that $L(\ell-1)$ holds true. Thus, for all $i\in \mathbb{N}
$, there are constants $c,\beta _{\ell-1}^{i},\gamma _{\ell-1}^{i}\geq0$ such
that $\frac{\partial ^{i}h_{t}}{\partial t^{i}}(r)$ satisfies the
estimate (\ref{sequences estimate}).

We shall prove that $L(\ell)$ holds true. Indeed, from the estimates of the
heat kernel in (\ref{heat}), we have that
\begin{equation*}
|h_{t}(r)|\leq c\;t^{-\frac{n}{2}}(1+t)^{\frac{n-3}{2}}(1+r
)^{\frac{n-1}{2}}e^{-\left( \frac{Q^2}{4}t+\frac{Q}{2}r+ \frac{r^2}{4t} \right) }.
\end{equation*}%
Thus (\ref{sequences estimate}) holds true for $i=0$ with $\beta
_{\ell}^{0}=\gamma _{\ell}^{0}=1.$
For $i\geq 1$, consider the function
\begin{equation*}
f_{r}(t)=\frac{\partial ^{i-1}h_{t}}{\partial t^{i-1}}(r).
\end{equation*}%
From the validity of $L\left( \ell-1\right) $, we get that for $i-1$ and $%
i+1 $ we have that
\begin{equation*}
\begin{split}
\left\vert f_{r}(t) \right\vert= \left\vert \frac{\partial ^{i-1}h_{t}}{%
	\partial t^{i-1}}(r)\right\vert &\leq c\;t^{-\alpha }(1+t)^{\beta
}(1+r)^{\gamma }e^{-Dt-B \frac{Q}{2}r -C \frac{r^2}{4t} }, \\
\left\vert \frac{d^{2}f_{r}}{dt^{2}}(t)\right\vert =  \left\vert \frac{%
	\partial ^{i+1}h_{t}}{\partial t^{i+1}}(r)\right\vert &\leq
c\;t^{-\alpha -2}(1+t)^{\beta }(1+r)^{\gamma } e^{-D_{\ast }t-B_{\ast } \frac{Q}{2}r
	-C_{\ast } \frac{r^2}{4t}},
\end{split}
\end{equation*}

\smallskip
with
\begin{equation*}
\begin{split}
\alpha &=\frac{n}{2}+i-1, \; \beta =\frac{n-3}{2}, \; \gamma =\frac{n-1}{2},\\
 D&=B=\beta
_{\ell-1}^{i-1},\; C=\gamma _{\ell-1}^{i-1} \; \text{and}
\;D_{\ast }=B_{\ast }=\beta
_{\ell-1}^{i+1},\; C_{\ast }=\gamma _{\ell-1}^{i+1}.
\end{split}
\end{equation*}
One can verify that $\alpha>\beta$, $B \geq B_*$ and $C \geq C_*$.
Thus, by Lemma \ref{Porper} applied for the function $f_{r}(t)$, it follows
that
\begin{equation*}
\left\vert \frac{df_{r}}{dt}(t)\right\vert =\left\vert \frac{\partial
^{i}h_{t}}{\partial t^{i}}(r)\right\vert \leq
c\;t^{-\frac{n}{2}-i}(1+t)^{\frac{n-3}{2}}(1+r)^{\frac{n-1}{2}}
 e^{-\beta _{\ell
}^{i}\left( \frac{Q^2}{4t}+\frac{Q}{2}r \right)
}e^{-\gamma _{\ell }^{i}{\frac{r^2}{4t}}},
\end{equation*}%
for all $i\geq 1,$ where $\beta _{\ell }^{i}$ and $\gamma _{\ell }^{i}$
satisfy  (\ref{recurrence relations}). Thus, the statement $L(\ell )$ is
valid and the proof of the lemma is complete.
\end{proof}

\begin{remark}
The constant $c=c(i,\ell,\epsilon)$ in relation (\ref{sequences estimate})
of Lemma \ref{Technical Lemma} depends on $i, \ell$ and $\epsilon$ and it
increases to infinity (when either $i\rightarrow \infty$ or $\ell\rightarrow
\infty$ or $\epsilon\rightarrow 0$), but we only need the fact that it is
finite for fixed $i,\ell, \epsilon$.
\end{remark}

In the following Lemma, we shall prove by induction that the exponential coefficients $\beta_{\ell}^i$, $\gamma_{\ell}^i$ of Lemma 5 are convergent sequences of $\ell$. Using this fact, we shall show that these coefficients, after a sufficiently large number of iterations and $\epsilon$ sufficiently close to zero, can get arbitratily close to $1.$

\begin{lemma}
For any $i\in \mathbb{N},$
\begin{equation}  \label{sequence convergence}
\lim_{\ell\rightarrow \infty}\gamma_{\ell}^i = \left( 1 - \sqrt{1 -\lambda
{}_{\epsilon}} \right)^i \text{ and } \lim_{\ell\rightarrow
\infty}\beta_{\ell}^i = 1.
\end{equation}
\end{lemma}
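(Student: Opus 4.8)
The plan is to analyze the two coupled recurrences in \eqref{recurrence relations} separately, since they decouple: $\beta_\ell^i$ satisfies a homogeneous linear recurrence with the trivial ``kernel'' coefficients $1/2,1/2$, while $\gamma_\ell^i$ satisfies an analogous recurrence with the skewed coefficients $\lambda_\epsilon/2$ and $1/2$. For the $\beta$ part, observe that for each fixed $\ell$ the sequence $(\beta_\ell^i)_{i\ge 0}$ is bounded between $0$ and $1$: indeed $\beta_\ell^0=1$ for all $\ell$, $\beta_0^i=0$ for $i\ge1$, and the averaging step $\beta_\ell^i=\tfrac12(\beta_{\ell-1}^{i-1}+\beta_{\ell-1}^{i+1})$ preserves the interval $[0,1]$. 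Moreover one checks by induction on $\ell$ that, for each fixed $i$, the sequence $(\beta_\ell^i)_\ell$ is nondecreasing in $\ell$ (the boundary value $1$ can only ``propagate inward''). A monotone bounded sequence converges; call the limit $\beta_\infty^i$. Passing to the limit in the recurrence gives $\beta_\infty^i=\tfrac12(\beta_\infty^{i-1}+\beta_\infty^{i+1})$ with $\beta_\infty^0=1$, i.e.\ $(\beta_\infty^i)_i$ is affine in $i$; since it is also bounded in $[0,1]$, it must be constant, hence $\beta_\infty^i=1$ for all $i$. This proves the second limit in \eqref{sequence convergence}.

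For the $\gamma$ part, the same monotonicity and boundedness argument applies: $0\le\gamma_\ell^i\le 1$, with $\gamma_\ell^0=1$, $\gamma_0^i=0$, and the map $(a,b)\mapsto\tfrac12(\lambda_\epsilon a+b)$ preserves $[0,1]$ since $\lambda_\epsilon\in(0,1)$; monotonicity in $\ell$ again follows by induction. So for each $i$ the limit $\gamma_\infty^i:=\lim_\ell\gamma_\ell^i$ exists, and by passing to the limit in \eqref{recurrence relations} it satisfies
\begin{equation*}
\gamma_\infty^i=\tfrac12\bigl(\lambda_\epsilon\,\gamma_\infty^{i-1}+\gamma_\infty^{i+1}\bigr),\qquad i\ge1,\qquad \gamma_\infty^0=1.
\end{equation*}
This is a constant-coefficient linear recurrence in $i$, with characteristic equation $x^2-2x+\lambda_\epsilon=0$, whose roots are $x_\pm=1\pm\sqrt{1-\lambda_\epsilon}$. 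Since $\lambda_\epsilon\in(0,1)$ we have $0<x_-<1<x_+$, so the bounded solutions of the recurrence are exactly the multiples of $(x_-^i)_i$; together with $\gamma_\infty^0=1$ this forces $\gamma_\infty^i=x_-^i=\bigl(1-\sqrt{1-\lambda_\epsilon}\bigr)^i$, which is the first limit in \eqref{sequence convergence}.

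The main obstacle is not the fixed-point computation but justifying \emph{a priori} that the limits exist, i.e.\ establishing the monotonicity of $\ell\mapsto\beta_\ell^i$ and $\ell\mapsto\gamma_\ell^i$ for each fixed $i$, and the uniform bound $\gamma_\ell^i\le 1$ (so that among solutions of the limiting recurrence we may discard the unbounded branch $x_+^i$). Both facts are proved by a double induction on $\ell$ and $i$ using \eqref{recurrence relations} and the initial/boundary data \eqref{initial conditions}; the monotonicity step is where one must be slightly careful, checking $\beta_1^i\ge\beta_0^i$ and $\gamma_1^i\ge\gamma_0^i$ as the base case and then propagating $\beta_{\ell+1}^i-\beta_\ell^i=\tfrac12\bigl((\beta_\ell^{i-1}-\beta_{\ell-1}^{i-1})+(\beta_\ell^{i+1}-\beta_{\ell-1}^{i+1})\bigr)\ge0$, and similarly for $\gamma$ with the weight $\lambda_\epsilon$. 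Once existence of the limits is in hand, identifying them is the elementary exercise above.
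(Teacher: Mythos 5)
Your proposal is correct and follows essentially the same route as the paper: boundedness in $[0,1]$ and monotonicity in $\ell$ by double induction from \eqref{recurrence relations} and \eqref{initial conditions}, then passing to the limit and solving the resulting constant-coefficient recurrence in $i$, discarding the unbounded root $1+\sqrt{1-\lambda_\epsilon}$ and fixing the constant via $\gamma_\infty^0=1$. The only (welcome) addition is that you spell out the $\beta$ case, which the paper omits as ``similar''.
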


\begin{proof}
We shall deal only with $\gamma_{\ell}^i$. The proof that $%
\lim_{\ell\rightarrow \infty}\beta_{\ell}^i = 1$ is similar, and we shall
omit it.

\textit{Claim 1.} For every $\ell \in \mathbb{N}$ consider the following
statement $L(\ell )$: for all $i\in \mathbb{N},$
\begin{equation}
\gamma _{\ell }^{i}\leq 1.
\end{equation}%
We shall prove by induction that $L(\ell )$ is valid for all $\ell \in
\mathbb{N}$.

For $\ell=0$ we have to prove that for all $i\in \mathbb{N},$ we have that $%
\gamma _{0}^{i}\leq 1.$ Indeed, this is a consequence of the initial
conditions $\gamma _{0}^{i}=0$ and $\gamma _{0}^{0}=1.$ Thus $L(0)$ holds
true.

Let us assume now that $L(\ell-1)$ holds true. Thus, for all $i \in \mathbb{N%
},$ we have that $\gamma_{\ell-1}^i \leq 1.$

We shall prove that $L(\ell)$ holds true. Recall that by the induction
assumption, for all $i\in \mathbb{N}$, for $i-1$ and $i+1$ we have that $%
\gamma _{\ell-1}^{i-1}\leq 1$ and $\gamma _{\ell-1}^{i+1}\leq 1$. Thus, from
(\ref{recurrence relations}) it follows that
\begin{equation*}
\gamma _{\ell}^{i}=\frac{\lambda _{\epsilon }}{2}\gamma _{\ell-1}^{i-1}+%
\frac{1}{2}\gamma _{\ell-1}^{i+1}\leq \frac{\lambda _{\epsilon }}{2}+\frac{1%
}{2}\leq 1,
\end{equation*}%
thus the statement $L(\ell)$ is valid and this completes the proof of Claim
1.

\textit{Claim 2.} For every $\ell \in \mathbb{N}$ consider the following
statement $L(\ell )$: for all $i\in \mathbb{N},$
\begin{equation}
\gamma _{\ell }^{i}\leq \gamma _{\ell +1}^{i}.
\end{equation}%
We shall prove that $L(\ell )$ is valid for all $\ell \in \mathbb{N}$. We
proceed once again by induction in $\ell \in \mathbb{N}.$

For $\ell=0$ we have to prove that for all $i\in \mathbb{N},$ $\gamma
_{0}^{i}=0\leq \gamma _{1}^{i}.$ Indeed, from (\ref{initial conditions}) it
follows that $\gamma _{0}^{i}=0\leq \gamma _{1}^{i},$ for all $i>0$ and $%
\gamma _{0}^{0}=1=\gamma _{1}^{0}.$ Therefore the statement $L(0)$ holds
true.

Let us assume now that $L(\ell-1)$ holds true, i.e. that for all $i\in
\mathbb{N},$ $\gamma _{\ell-1}^{i}\leq \gamma _{\ell}^{i}.$

We shall prove that $L(\ell)$ holds true, i.e. that for all $i\in \mathbb{N}%
, $ $\gamma _{\ell}^{i}\leq \gamma _{\ell+1}^{i}.$ Recall that by (\ref%
{recurrence relations}) we have that
\begin{equation}
\gamma _{\ell}^{i}=\frac{1}{2}(\lambda _{\epsilon }\gamma
_{\ell-1}^{i-1}+\gamma _{\ell-1}^{i+1}).  \label{itfor}
\end{equation}%
Then by the induction assumption for $i-1$ and $i+1$ we have that $\gamma
_{\ell-1}^{i-1}\leq \gamma _{\ell}^{i-1}$ and $\gamma _{\ell-1}^{i+1}\leq
\gamma _{\ell}^{i+1}.$ Hence, from (\ref{itfor}) we get that
\begin{equation*}
\gamma _{\ell}^{i}\leq \frac{1}{2}(\lambda _{\epsilon }{\gamma _{\ell}}%
^{i-1}+\gamma _{\ell}^{i+1})=\gamma _{\ell+1}^{i}.
\end{equation*}%
Thus the statement $L(\ell)$ is valid and the proof of Claim 2 is complete.

\textit{Claim 3.} For all $i\in \mathbb{N},$
\begin{equation}  \label{claim3}
\lim_{\ell\rightarrow \infty}\gamma_{\ell}^i = \left( 1 - \sqrt{1 -\lambda
{}_{\epsilon}} \right)^i .
\end{equation}

Note that by Claim 2, the sequence $\gamma _{\ell }^{i}$ is increasing in $%
\ell $ and by Claim 1, $\gamma _{\ell }^{i}$ is bounded above. Thus $%
\lim_{\ell \rightarrow \infty }\gamma _{\ell }^{i}$ exists and since $0\leq
\gamma _{\ell }^{i}\leq 1,$ then
\begin{equation*}
\lim_{\ell \rightarrow \infty }\gamma _{\ell }^{i}=\gamma _{i}\leq 1.
\end{equation*}%
Note that $\gamma _{\ell }^{0}=1$, for all $\ell \in \mathbb{N},$ thus $%
\gamma _{0}=1.$

Now, taking limits in the iteration formula (\ref{itfor}) we obtain that
\begin{equation*}
\gamma _{i}=\frac{1}{2}(\lambda _{\epsilon }\gamma _{i-1}+\gamma _{i+1}),
\end{equation*}%
thus
\begin{equation*}
\gamma _{i+1}-2\gamma _{i}+\lambda _{\epsilon }\gamma _{i-1}=0.
\end{equation*}%
This is a homogeneous linear recurrence relation with constant coefficients
and the solutions of this equation are given by
\begin{equation*}
\gamma _{i}=C_{1} \rho_{1}^{i}+C_{2} \rho_{2}^{i},\text{ \ }C_{1},C_{2}\in \mathbb{R}%
,
\end{equation*}%
where $\rho_{1}, \rho_{2}$ are the roots of the equation
\begin{equation*}
\rho^{2}-2\rho+\lambda _{\epsilon }=0.
\end{equation*}%
Thus, we conclude that
\begin{equation}
\gamma _{i}=C_{1}\left( 1-\sqrt{1-\lambda {}_{\epsilon _{1}}}\right)
^{i}+C_{2}\left( 1+\sqrt{1-\lambda {}_{\epsilon}}\right) ^{i},
\label{gammai}
\end{equation}%
for some $C_{1},C_{2}\in \mathbb{R}.$

Since $0\leq \gamma _{i}\leq 1$, we get $C_{2}=0,$ otherwise $%
\lim_{i\rightarrow \infty }\gamma _{i}=\infty $. Also, since $\gamma _{0}=1$%
, we get $C_{1}=1.$ Thus, from (\ref{gammai}) for $C_{1}=1,$ $C_{2}=0,$ we
get (\ref{claim3}) and the proof is complete.
\end{proof}

\bigskip

\textit{End of the proof of Theorem \ref{Main result}}: To complete the
proof of Theorem \ref{Main result}, note that $\lim_{\epsilon
\rightarrow 0}\left( 1-\sqrt{1-\lambda {}_{\epsilon }}\right) ^{i}=1$%
. Thus, taking $\ell\in \mathbb{N}$ sufficiently large and $\epsilon $
sufficiently close to zero, one has $\gamma _{\ell}^{i}\geq 1-\epsilon $ and
$\beta _{\ell}^{i}\geq 1-\epsilon .$ Thus, from (\ref{sequences estimate})
and (\ref{sequence convergence}) it follows that
\begin{equation*}
\left\vert \frac{\partial ^{i}h_{t}}{\partial t^{i}}(r)\right\vert \leq
c\;t^{-\frac{n}{2}-i}(1+t)^{\frac{n-3}{2}}(1+r)^{\frac{n-1}{2}}e^{-(1-\epsilon
)\left( \frac{Q^2}{t}t+ \frac{Q}{2}r + \frac{r^2}{4t}\right) }.
\end{equation*}%
Taking now into account that if $a ,b >0$, then there exists a
constant $c=c(a ,b )$ such that $x^{a}\leq ce^{b x}$ for
all $x>0,$ we conclude that for every $\epsilon >0,$ there exists a constant
$c>0$ such that
\begin{equation*}
\left\vert \frac{\partial ^{i}h_{t}}{\partial t^{i}}(x,y)\right\vert \leq
c\;t^{-\frac{n}{2}-i}e^{-(1-\epsilon )\left( \frac{Q^2}{t}t+ \frac{Q}{2}r + \frac{r^2}{4t}\right) },
\end{equation*}%
and the proof of Theorem \ref{Main result} is complete.

\bigskip

\section{Proof of Theorem \protect\ref{maximal}}

\label{Functions of the Laplacian}

In this section we apply the estimates of the derivatives of the heat
kernel.  We claim that the operators ${H}_{\sigma, max }$ and ${H}_{\sigma }$
defined in Section \ref{sectionintroduction} are bounded on $%
L^{p}(S), p \in (1, \infty),$ provided that
\begin{equation*}
\sigma < Q^2/pp' .
\end{equation*}%

We shall give only the proof for ${H}_{\sigma, max }$. The proof for ${H}%
_{\sigma }$ is similar and then omitted. Note that in this case, apart from Theorem 1, gradient estimates (\ref{grad}) are also required.

We shall consider separately the \textit{small time} maximal operator
\[
H_{\sigma, max}^{0}(f)(x)= \underset{0<t\leq 1}{\sup }e^{\sigma t}t^i | \frac{\partial^i}{\partial t^i} H_t(f)(x)|
\]
and the \textit{large time} maximal operator
\[
H_{\sigma, max}^{\infty}(f)(x)= \underset{t\geq 1}{\sup }e^{\sigma t}t^i | \frac{\partial^i}{\partial t^i}  H_t(f)(x)|.
\]
As noted in \cite[p.276]{AN1}, the whole problem comes from the component $H_{\sigma, max}^{\infty}.$

Let
\[
 {%
k}_{\sigma, max }^{\infty }(x)=\underset{t\geq 1}{\sup }e^{\sigma t}t^i | \frac{\partial^i}{\partial t^i}  h_t (x)|.
\]
Then, the component $H_{\sigma, max}^{\infty}$ can be handled by estimating
\begin{equation}\label{convinfty}
H_{\sigma, max}^{\infty}(f)(x)\leq |f| \ast  {%
k}_{\sigma, max }^{\infty }
\end{equation}
and applying the Kunze-Stein phenomenon (\ref{kunze}).

To be more precise, Theorem 1 implies the following upper bound.
\begin{lemma}
\label{global_lemma} For all $\epsilon \in (0,1) $ there exists $c>0$ such that
\begin{equation*}
\text{ }\left\vert {%
k}_{\sigma, max }^{\infty }(r)\right\vert \leq ce^{-(1-\epsilon ) \frac{Q}{2}r}e^{-(1-\epsilon )r \sqrt{\frac{Q^2}{4}-\frac{\sigma }{1-\epsilon }}},\text{ }
\end{equation*}
for all $r > 0$.
\end{lemma}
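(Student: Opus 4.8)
The plan is to estimate the two suprema separately, using the pointwise bound on the time derivatives from Theorem \ref{Main result} (with a new, smaller $\epsilon$ to be absorbed at the end) and then optimizing in $t$ over the relevant range. For $k_\sigma^\infty$ the supremum runs over $t\geq 1$, so by Theorem \ref{Main result} we have, for any $\epsilon'>0$,
\[
e^{\sigma t}t^{i}\left|\frac{\partial^{i}h_{t}}{\partial t^{i}}(r)\right|\leq c\, e^{\sigma t}t^{-n/2}e^{-(1-\epsilon')\left(\frac{Q^{2}}{4}t+\frac{Q}{2}r+\frac{r^{2}}{4t}\right)}
\leq c\, e^{-(1-\epsilon')\frac{Q}{2}r}\,e^{-\left((1-\epsilon')\frac{Q^{2}}{4}-\sigma\right)t-(1-\epsilon')\frac{r^{2}}{4t}},
\]
where we have used $t^{-n/2}\leq c$ on $t\ge 1$ (and, if one prefers, $t^{-n/2}e^{\eta t}$ is bounded). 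Writing $A=(1-\epsilon')\frac{Q^{2}}{4}-\sigma$, which is positive once $\epsilon'$ is small since $\sigma<Q^2/pp'\le Q^2/4$, the remaining factor is $\exp\!\big(-At-(1-\epsilon')\frac{r^{2}}{4t}\big)$, and $\sup_{t>0}e^{-At-B/t}=e^{-2\sqrt{AB}}$ with $B=(1-\epsilon')\frac{r^{2}}{4}$. Hence $\sqrt{AB}=\frac r2\sqrt{(1-\epsilon')A}=(1-\epsilon')\frac r2\sqrt{\frac{Q^{2}}{4}-\frac{\sigma}{1-\epsilon'}}$, which gives exactly the claimed bound after renaming $\epsilon'$ to $\epsilon$ (noting that the supremum over the subset $t\ge 1$ is dominated by the supremum over all $t>0$).

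For $k_\sigma^{0,\infty}=(1-\chi)k_\sigma^0$, the supremum is over $0<t<1$ but the factor $(1-\chi)$ restricts to $r\geq c_0>0$ (the unit-ball radius), so the Gaussian term $\exp(-(1-\epsilon')r^{2}/4t)$ is harmless: on $0<t<1$ the only possible growth in Theorem \ref{Main result} comes from $t^{-n/2-i}$, but this is compensated by $e^{\sigma t}t^{i}$ only in part, so instead I would argue that for $r$ bounded below the function $t\mapsto t^{-n/2}e^{-(1-\epsilon')r^{2}/4t}$ is bounded on $(0,1)$ by a constant depending only on $r$ through a decaying exponential — more precisely $t^{-n/2}e^{-(1-\epsilon')r^{2}/(8t)}\le c_r$ with $\sup_{r\ge c_0}c_r<\infty$, and the leftover $e^{-(1-\epsilon')r^{2}/(8t)}\le e^{-(1-\epsilon')r^{2}/8}$ for $t<1$, which together with $e^{-(1-\epsilon')\frac Q2 r}$ and $e^{\sigma t}\le e^{\sigma}$ already yields a bound of the required form (indeed a stronger one, with Gaussian decay in $r$). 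Alternatively, and more uniformly with the previous case, one can extend the supremum to all $t>0$ and repeat the one-variable optimization $\sup_{t>0}e^{\sigma t}t^{-n/2}e^{-(1-\epsilon')(\frac{Q^2}{4}t+\frac{r^2}{4t})}$ exactly as above; the extra $t^{-n/2}$ versus the $t^{-n/2-i}$ is immaterial after using $t^{-N}\le c\,e^{\eta t}$ for the $t\ge 1$ tail and the $r\ge c_0$ restriction for the $t\to 0$ end.

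The main obstacle is bookkeeping the $\epsilon$'s so that the final exponent reads $\sqrt{Q^2/4-\sigma/(1-\epsilon)}$ with a single $\epsilon$: one starts from $\epsilon'$ in Theorem \ref{Main result}, the optimization produces $(1-\epsilon')\sqrt{Q^2/4-\sigma/(1-\epsilon')}$ as the rate in $r$, and one must check this is $\ge (1-\epsilon)\sqrt{Q^2/4-\sigma/(1-\epsilon)}$ for suitable $\epsilon=\epsilon(\epsilon')$ — which holds by continuity since the map $\epsilon'\mapsto(1-\epsilon')\sqrt{Q^2/4-\sigma/(1-\epsilon')}$ tends to $\sqrt{Q^2/4-\sigma}$ as $\epsilon'\to 0$, while the target tends to the same limit. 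The positivity of the quantity under the square root throughout is guaranteed by $\sigma<Q^2/4$, so no case distinction is needed; the only genuinely quantitative point is absorbing polynomial factors in $t$ into exponentials, which is the elementary inequality $x^{\alpha}\le c_{\alpha,\eta}e^{\eta x}$ already used in the proof of Theorem \ref{Main result}.
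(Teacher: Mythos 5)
Your proposal is correct and follows essentially the same route as the paper: apply Theorem \ref{Main result}, absorb $e^{\sigma t}$ into the $e^{-(1-\epsilon)Q^2t/4}$ factor, and bound $\sup_{t}e^{-At-B/t}$ by $e^{-2\sqrt{AB}}$, with the support restriction $r\geq c_0$ handling the $t\to 0$ blow-up for $k_\sigma^{0,\infty}$. The closing continuity argument for the $\epsilon$'s is unnecessary since, as you note, the optimization already yields the exponent $(1-\epsilon')r\sqrt{Q^2/4-\sigma/(1-\epsilon')}$ in exactly the stated form.
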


\begin{proof}
From the estimates (\ref{estimates main result}) of the derivative $\frac{%
\partial ^{i}}{\partial t^{i}}h_{t}$ provided by Theorem \ref{Main result},
we get that
\begin{align*}
\left\vert {k}_{\sigma }^{\infty }(r)\right\vert & \leq c\; \underset{t\geq
1}{\sup }\; e^{\sigma t}t^{i}\left\vert \frac{\partial ^{i}}{\partial t^{i}}%
h_{t}(r)\right\vert \\
& \leq c \; \underset{t\geq 1}{\sup }\;e^{\sigma t}e^{-(1-\epsilon )  \left( \frac{Q^2}{4}t+\frac{Q}{2}r+{
\frac{r^2}{4t}}\right) } \\
& \leq c \; \underset{t\geq 1}{\sup } \;e^{-(1-\epsilon )\frac{Q}{2}r }e^{-(1-\epsilon )\left( \left( \frac{Q^2}{4} -\frac{%
\sigma }{1-\epsilon }\right) t+ \frac{r^2}{4t}\right) }
\\
& \leq c\; e^{-(1-\epsilon ) \frac{Q}{2}r} e^{-(1-\epsilon )r
\sqrt{\frac{Q^2}{4}-\frac{\sigma }{1-\epsilon }}}.
\end{align*}
\end{proof}

Next, by (\ref{convinfty}) and (\ref{kunze}) we find
\begin{equation}
\Vert H_{\sigma, max }^{\infty }\Vert _{L^{p}(S)\rightarrow L^{p}(S)}\leq
\int_{S} {dx}\;|{k}_{\sigma, max }^{\infty }(x)|\phi _{i(\frac{1}{p}-\frac{1}{2})Q }(x)
.  \label{2KS section 5}
\end{equation}%
Then, taking into account (\ref{spherical}), we conclude that the integral condition (\ref{2KS section 5})  becomes
\begin{equation} \label{integral condition}
\quad
\begin{cases}
\lbrace \int_{0}^{1} dr\;  r^{n-1} + \int_{1}^{+\infty} dr\; e^{\frac{Q}{p}r} \rbrace k^{\infty}_{\sigma, max}(r) < +\infty, \quad \text{if} \quad 1 \leq p < 2, \\
\lbrace \int_{0}^{1} dr\; r^{n-1} + \int_{1}^{+\infty} dr\; (1+r)e^{\frac{Q}{p}r} \rbrace k^{\infty}_{\sigma, max}(r) < +\infty, \quad \text{if} \quad p=2, %
\end{cases}%
\end{equation}
\cite[p.656]{AND}. We shall show that the integrals in (\ref{integral condition}) converge. Using the estimates of ${k}_{\sigma, max }^{\infty }$ obtained in Lemma \ref%
{global_lemma} and (\ref{integral condition}), we get that
\begin{align} \label{int_sigma}
 \int_{S}  dr \; {k}_{\sigma, max }^{\infty }(x) \; \phi _{i(\frac{1}{p}-\frac{1}{2})}(x)
& \leq c \int_{0}^{1}  dr \;r^{n-1} \; e^{-(1-2\epsilon ) \frac{Q}{2}r }e^{-(1-2\epsilon ) r \sqrt{ \frac{Q^2}{4}-\frac{\sigma }{%
			1-2\epsilon }}} \notag \\
&+  \int_{1}^{+\infty}  dr \;e^{\frac{Q}{p}r} \; e^{-(1-2\epsilon ) \frac{Q}{2}r }e^{-(1-2\epsilon ) r \sqrt{ \frac{Q^2}{4}-\frac{\sigma }{%
			1-2\epsilon }}},  
\end{align}
for all $p \in (1,2)$, where we used that $(1+r)e^{-(1-\epsilon)\frac{Q}{2}r} \leq c\; e^{-(1-2\epsilon)\frac{Q}{2}r}$.

The first integral is finite, while the second integral converges, provided that
\begin{equation}
\frac{Q}{p}-(1-2\epsilon)\frac{Q}{2}-(1-2\epsilon)\sqrt{ \frac{Q^2}{4}-\frac{\sigma }{%
		1-2\epsilon }} <0.  \label{sigmaepsilon}
\end{equation}%
Choosing $\epsilon $ small enough, it follows from (\ref{sigmaepsilon}) that
the inegral in (\ref{int_sigma}) converges when
\begin{equation}
\sigma < Q^2/pp'.  \label{sigma}
\end{equation}%
Thus, by duality, $H_{\sigma, max }^{\infty }$ is bounded on $L^{p}(S),$ $p\in (1,\infty ),$
if (\ref{sigma}) holds true.

Next, it is left to show that the component $H_{\sigma, max}^{0}$ is bounded on $L^{p}(S),$ $p\in (1,\infty ).$ We split the operator $H_{\sigma, max}^{0}$ into two parts
\[
H_{\sigma, max}^{0,0}(f)(x)= \underset{0<t\leq 1}{\sup }e^{\sigma t}t^i |\frac{\partial^i}{\partial t^i}  f\ast \psi h_t(x)|
\]
and
\[
H_{\sigma, max}^{0,\infty}(f)(x)= \underset{0<t\leq 1}{\sup }e^{\sigma t}t^i |\frac{\partial^i}{\partial t^i}  f\ast (1-\psi) h_t(x)|,
\]
using a smooth cutoff fuction $\psi\in C_{c}^{\infty}(S)$ with $\psi\equiv 1$ near the origin. Then we observe that the second term $H_{\sigma, max}^{0, \infty}$ can be handled like $H_{\sigma, max}^{\infty}$ and the first term  $H_{\sigma, max}^{0,0}(f)(x)$ can be handled as in the Euclidean case (see for example \cite[p.278]{AN1}, \cite[p.670]{AND}).

\begin{remark} In a similar way one can study the boundedness of the operators (\ref{sigmamax}) and (\ref{Hsigma}) related to the Poisson operator $P_{t}=e^{-t(-\Delta_S )^{1/2}},$ as well as the boundedness of the Riesz transform $	R =\nabla (-\Delta_{S})^{-1/2}, $ already studied in \cite{AND}. However, the estimates obtained in the present paper are not sharp enough to deal with weak $L^1$ boundedness.
\end{remark}

\begin{acknowledgement}
The authors would like to thank E. Samiou for bringing to
their attention Damek-Ricci spaces, and Professor Michel Marias for his valuable
remarks.
\end{acknowledgement}

\vspace{\baselineskip}

\end{document}